\title{Group Structures on Families of Subsets of a Group}
\author{Mario G\'omez, Sergio R. L\'opez-Permouth , Fernando Mazariegos, \\ Alejandro Jos\'e Vargas De Le\'on and Rigoberto Zelada Cifuentes}
\newtheorem*{define}{Definition}
\newtheorem*{rmk}{Remark}
\newtheorem{thm}{Theorem}
\newcommand{\sG}{\mathcal G}
\begin{document}
\newpage
\maketitle
\begin{abstract}
    A binary operation on any set induces a binary operation on its subsets. We explore families of subsets of a group that become a group under the induced operation and refer to such families as {\it power groups} of the given group.  Our results serve to characterize groups in terms of their power groups.  In particular, we consider when the only power groups of a group are the factor groups of its subgroups and when that is the case up to isomorphism. We prove that the former are precisely those groups for which every element has finite order and provide examples to illustrate that the latter is not always the case. In the process we consider several natural questions such as whether the identity element of the group must belong to the identity element of a power group or the inverse of an element in a power group must consist of the inverses of its elements.
\end{abstract}



\section {Introduction}

Traditionally, group theory has been built upon understanding the subgroups and the factor groups of a given group.  In other words, our understanding of groups has largely rested on understanding their subquotients (factors of subgroups or subgroups of factors; it's the same.)  We propose the power groups of a group as a larger collection of groups that might give us a better insight into the structure of the original group.  The Spanish philosopher Ortega y Gasset said {\it I am I plus my circumstance} \cite{Ph}; inspired by that thought, we have been referring to the family of power groups of a group $G$ as the {\it circumstance} of $G$.  This paper is a first step as we  propose to study the way the structure of a group is determined by its circumstance. 

Reading this paper requires very little background; an undergraduate Modern Algebra course using a textbook like \cite {G} will suffice.

A binary operation on any set $S$ induces a binary operation on its subsets. Let us refer to the operation on the original set as {\it product}
and the one on its subsets as {\it subset product} Then, for subsets $A, B \subseteq S$,  the subset product
$AB$ equals $ \{ab ~|~ a \in A, b \in B\} $. Clearly not all families of subsets of a group $G$ will become groups under the induced subset product.
In fact, $\emptyset$ can only belong to one such family, namely $\{ \emptyset \}$. For convenience, we consider therefore only families of 
non-empty subsets in what follows. If $\sG \subseteq \mathcal P(G) - \{ \emptyset\} $ is a group with the above defined product, then $\sG$ is a \emph{power group} of the group $G$.

\section {Groups whose only power groups are factor groups}

The construction of factor groups, a standard topic in a first course in group theory for undergraduates, views the elements of such groups 
in one of two ways which are equivalent when the identity $N$ of the obtained structure is a normal subgroup.  For a group $G$ and a normal subgroup $N$ of $G$,
one can think of the operation on $\frac {G}{N}$ as being the multiplication of the subsets $aN$ and $bN$ with the binary operation induced on subsets
or think of the identity $(aN)(bN)=(ab)N$ as being a definition (which can be shown to be well-defined (independent from the choice of representatives) under the given hypotheses.  We will see that the two approaches are not necessarily equivalent when the hypothesis for the identity for the power group is eased.  Our considerations will give rise to two separate notions which we will name {\it subquotients} and {\it groups of cosets}. Subquotients are the subject of this section while groups of cosets will make their appearance in the next one.

Since the factor group $\frac {G} {N}$ of a group $G$ modulo a normal subgroup $N$  may be viewed as the group structure induced by the subset product on
the family of left (equivalently right) cosets of $N$ in $G$, $\frac {G} {N}$  is a natural example of a power group of the group $G$.  More generally, give any subgroup $H < G$ and a normal subgroup $N$ of $H$, the factor group $\frac {H} {N}$ is also a power group of $G$.  To facilitate the conversation, we give these examples a name.

\begin{define}
    A power group $\sG$ of $G$ is a subquotient group if there exists a subgroup $H$ of $G$, and a normal subgroup $N$ of $H$, such that $\sG$ consists precisely of the cosets of $N$ in $H$ (i.e.  $\sG = \frac {H} {N}$.)
\end{define}
The next theorem shows that subquotients are characterized by many natural properties.
\begin{thm}
    The following are equivalent for a power group $\sG$ of $G$:
    \begin{enumerate}[label=(\alph*)]
        \item $\sG$ is a subquotient group of G.
        \item $E$, the identity element of $\sG$, is a subgroup of $G$.
        \item For every $A \in \sG$, if $x \in A$ then $x^{-1} \in A^{-1}$
        \item $H = \bigcup_{A \in \sG} A $ is a subgroup of $G$, and $\sG$ is a partition of $H$.  
    \end{enumerate}
\end{thm}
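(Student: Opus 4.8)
The plan is to treat (b) $\Rightarrow$ (a) as the heart of the matter and to obtain everything else cheaply around it. Concretely, I will prove (b) $\Rightarrow$ (a); observe that (a) $\Rightarrow$ (b) is immediate (if $\sG=\frac{H}{N}$ then $E=N$ is a subgroup of $G$); and then close the circle with the short implications (a) $\Rightarrow$ (c), (c) $\Rightarrow$ (b), (a) $\Rightarrow$ (d), and (d) $\Rightarrow$ (b).

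For the heart, assume $E$ is a subgroup of $G$, so that in particular the identity $e$ of $G$ lies in $E$. The crucial lemma is that every $A\in\sG$ is at once a left coset and a right coset of $E$; more precisely, $A=aE=Ea$ for every $a\in A$. To prove it, fix $A\in\sG$ with inverse $A^{-1}$ in the group $\sG$, so $AA^{-1}=A^{-1}A=E$. From $e\in E=AA^{-1}$ choose $a_1\in A$ and $b_1\in A^{-1}$ with $a_1b_1=e$; then $a_1^{-1}=b_1\in A^{-1}$, so $Aa_1^{-1}\subseteq AA^{-1}=E$ and hence $A\subseteq Ea_1$, while $a_1\in A$ together with $EA=A$ gives $Ea_1\subseteq A$; thus $A=Ea_1$. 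Symmetrically, $e\in E=A^{-1}A$ and $AE=A$ yield $A=a_0E$ for some $a_0\in A$. Being simultaneously the left coset $a_0E$ and the right coset $Ea_1$ of $E$, the set $A$ satisfies $Ea_0=Ea_1$ (because $a_0\in Ea_1$), hence $a_0E=A=Ea_0$; and then for any $a\in A$ one reads off $aE=a_0E=A=Ea_0=Ea$.

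Granting the lemma, put $H=\bigcup_{A\in\sG}A$. The members of $\sG$ are left cosets of $E$, so they are pairwise disjoint and partition $H$. Also $e\in E\subseteq H$; $H$ is closed under products because $\sG$ is; and if $g\in H$, say $g\in A$, then $gA^{-1}\subseteq AA^{-1}=E$ forces $A^{-1}\subseteq g^{-1}E$, and since $A^{-1}$ is a nonempty left coset of $E$ it must equal $g^{-1}E$, so $g^{-1}\in A^{-1}\subseteq H$. Hence $H$ is a subgroup of $G$. Moreover $gE=Eg$ for every $g\in H$ by the lemma, so $E\nsub H$, and $\sG=\{\,gE : g\in H\,\}=\frac{H}{E}$; that is, $\sG$ is a subquotient group of $G$. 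The remaining implications are routine. For (a) $\Rightarrow$ (c): if $\sG=\frac{H}{N}$ then $A=aN$ and $A^{-1}=a^{-1}N$ in $\sG$, and for $x=an\in aN$ normality gives $x^{-1}=n^{-1}a^{-1}\in a^{-1}N=A^{-1}$. For (c) $\Rightarrow$ (b): applying (c) to $A=E$ (whose $\sG$-inverse is $E$) shows that $E$ is closed under inverses, and with $EE=E$ and $E\neq\emptyset$ this forces $e\in E$, so $E$ is a subgroup. For (a) $\Rightarrow$ (d): in $\frac{H}{N}$ the union of all cosets is $H$ and they partition it. For (d) $\Rightarrow$ (b): the member $A_0$ containing $e$ satisfies $A_0A_0=A_0$, hence $A_0=E$, the unique idempotent of $\sG$, so $e\in E$; and for $x\in E$ we have $x^{-1}\in H$, say $x^{-1}\in B\in\sG$, whence $e=xx^{-1}\in EB=B$ gives $B=E$, so $E$ is closed under inverses and therefore a subgroup.

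The only step that is not mechanical is the crucial lemma inside (b) $\Rightarrow$ (a): using $e\in E=AA^{-1}=A^{-1}A$ to locate, on each side, an element of $A$ whose $G$-inverse lies in $A^{-1}$, and then squeezing $A$ between a one-sided coset of $E$ and itself — the two-sided conclusion being exactly what eventually makes $E$ normal in $H$. I expect that to be the main obstacle; the surrounding coset bookkeeping needs care but presents no real difficulty.
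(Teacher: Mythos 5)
Your proof is correct, but it is organized quite differently from the paper's. The paper proves the single cycle $(a)\rightarrow(b)\rightarrow(c)\rightarrow(d)\rightarrow(a)$, so the real content is spread across three steps: $(b)\rightarrow(c)$ uses the group structure of $E$ to manufacture inverses inside $A^{-1}$, $(c)\rightarrow(d)$ extracts the partition, and only $(d)\rightarrow(a)$ finally identifies $\sG$ with $\frac{H}{E}$. You instead make $(b)\rightarrow(a)$ the single load-bearing implication and let everything else radiate cheaply out of $(a)$ and back into $(b)$. Your key lemma --- that under (b) each $A\in\sG$ is squeezed between $Ea_1$ and itself on one side and between $a_0E$ and itself on the other, hence is a two-sided coset $aE=Ea$ for every $a\in A$ --- is not in the paper in this form, and it is a genuinely nice packaging: it makes visible in one stroke why $E$ must be normal in $H$ and why $\sG$ must be exactly $\frac{H}{E}$, whereas the paper reaches the same conclusion only after passing through the auxiliary conditions (c) and (d). The price you pay is proving six implications instead of four, though your extra ones ($(a)\rightarrow(c)$, $(c)\rightarrow(b)$, $(a)\rightarrow(d)$, $(d)\rightarrow(b)$) are indeed routine. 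One spot worth a half-sentence more: in $(d)\rightarrow(b)$ you assert $A_0A_0=A_0$ for the member $A_0$ containing $e$; the inclusion $A_0\subseteq A_0A_0$ comes from $e\in A_0$, and equality then needs the partition hypothesis (the set $A_0A_0$ belongs to $\sG$ and meets $A_0$), exactly as in the paper's own $(d)\rightarrow(a)$ step. With that line added, the argument is complete.
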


\begin{proof}
    Clearly $(a) \rightarrow (b)$, because then $\sG=\frac {H}{N}$ for some $H,N$ and the identity of $\sG$ is $N$. 
    
    To prove $(b) \rightarrow (c)$, take $x \in A$ and, by virtue of the fact that $\emptyset \notin \sG$, choose $y \in A^{-1}$.  It follows that $xy \in AA^{-1} = E$;  since $E$ is a group, there is $z \in E$ such that $(xy)z = e$. Finally, since $yz \in A^{-1}E = A^{-1}$, then we have $x^{-1} = yz \in A^{-1}$, proving (c).  
    
     To prove $(c) \rightarrow (d)$, let $H = \bigcup_{A \in \sG} A $; it is easy to see that $H$ is always closed under products and, under the assumption of (c), also under inverses.  Therefore, $H$ is a subgroup as claimed. To see that $\sG$ is a partition, suppose$A, B \in \sG$ share an element $x \in A \cap B$. Then,
under our assumption, $A^{-1}$ and $B^{-1}$ also share an element as $x^{-1} \in A^{-1}\cap B^{-1}$.  But then $e \in A^{-1}B$ and $e \in B^{-1}A$. This means that $A \subseteq AA^{-1}B = B$ and $B \subseteq BB^{-1}A = A$, so $A = B$ and therefore $\sG$ is a partition. 

    To prove $(d) \rightarrow (a)$, we aim to show that $\sG= \frac {H} {E}$ where $E$ is the identity element of $\sG$. 
Some notation will be convenient: for $x \in H$, let us use $A_x$ to denote the (unique) element of $\sG$ containing $x$. 
Our first aim is to show that $E= A_e$. As $e \in A_e^2$, considering that $\sG$ is a partition, $A_e^2 = A_e$. Cancellation in the group $\sG$ yields that $A_e = E$, as claimed; in particular $e \in E$. Next, we show that the elements of $\sG$ are the cosets of $E$, which then concludes the proof.  Taking $A_x \in \sG$ , the fact that $H$ is a  subgroup of $G$ yields that $A_{x^{-1}} \in \sG$.Under our assumption, noting that $e \in A_xA_{x^{-1}}$ implies $A_x^{-1} = A_{x^{-1}}$. 
For an arbitrary $x \in H$,
consider the coset $Ex \subseteq EA_x = A_x$. We want to get the reverse inclusion to prove that $ A_x= Ex$; for that purpose, pick $y \in A_x $. Now, $A_x = A_y$ and $A_{x^{-1}} = A_{y^{-1}}$.  Considering $A_yA_{x^{-1}} = A_yA_{y^{-1}} = E$ one concludes that $yx^{-1} \in E$ and, from this, $y \in Ex$.  
     
\end{proof}

Armed with an understanding of subquotients, we aim now to show that not all power groups are in general subquotients; in fact, the following theorem 
provides a characterization of those groups for which the only power groups are the subquotients.

\begin{thm}
    Let $G$ be a group, then the only power groups of G are the subquotients of $G$  if and only if for all $a \in G$ $ord(a) < \infty$.
\end{thm}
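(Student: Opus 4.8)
The plan is to prove the two implications separately, in each case reducing the work by appeal to the preceding theorem (the four-way characterization of subquotient groups).

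First I would handle the direction ``$ord(a)<\infty$ for all $a$ $\Rightarrow$ every power group of $G$ is a subquotient''. Let $\sG$ be a power group of $G$ with identity element $E$. The key observation is that $E$ is idempotent for the subset product, so $EE=E$; in particular $E$ is closed under the product of $G$. Taking any $x\in E$, all powers $x^k$ with $k\ge 1$ lie in $E$, and since $ord(x)=n<\infty$ we get $e=x^n\in E$ and $x^{-1}=x^{n-1}\in E$. Hence $E$ is a subgroup of $G$, and condition (b) of the previous theorem tells us $\sG$ is a subquotient of $G$.

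For the converse I would argue by contraposition: given $a\in G$ of infinite order, I would exhibit a power group that is not a subquotient. The witness is the family of ``rays'' $B_n:=\{a^k\mid k\ge n\}$ for $n\in\mathbb{Z}$, each of which is a nonempty subset of $G$. A one-line exponent computation gives $B_nB_m=\{a^j\mid j\ge n+m\}=B_{n+m}$, so $\sG:=\{B_n\mid n\in\mathbb{Z}\}$ is closed under the subset product; associativity is inherited from $G$, $B_0$ is a two-sided identity, and $B_{-n}$ inverts $B_n$. Since the infinite order of $a$ makes the powers $a^k$ pairwise distinct, $n\mapsto B_n$ is even an isomorphism $\mathbb{Z}\to\sG$. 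But $E=B_0=\{e,a,a^2,\dots\}$ is not a subgroup of $G$, because $a^{-1}\notin B_0$ — again using that $a$ has infinite order. By condition (b) of the previous theorem, read contrapositively, $\sG$ is not a subquotient, which finishes the proof.

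I do not expect a real obstacle here: the forward direction is a short semigroup-style argument, and the converse only requires producing the right example. The one spot that needs a moment's thought is the choice of that example — one wants subsets that are closed under the subset product yet whose idempotent identity fails to be a subgroup, and the half-lines $B_n$ do exactly this \emph{because} $a$ has infinite order; for $a$ of finite order every $B_n$ collapses to $\langle a\rangle$ and the construction degenerates into a subquotient, which is consistent with the statement.
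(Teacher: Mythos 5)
Your proposal is correct and follows essentially the same route as the paper: the forward direction is the identical argument that $EE=E$ plus finite order forces the identity $E$ to be a subgroup, and the converse rests on the same key witness, the idempotent set $\{a^k \mid k\ge 0\}$ of nonnegative powers of an infinite-order element, which fails to be a subgroup. The only cosmetic difference is that the paper takes the singleton power group $\{E\}$ while you embed $E$ as the identity of a $\mathbb{Z}$-indexed family of rays; both settle the matter via condition (b) of the preceding theorem.
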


\begin{proof}
($\leftarrow$) By virtue of Theorem 1, it suffices to show that  $E$, the identity element of $\sG$, is a subgroup.  That is indeed the case since $EE=E$
yields closure and, since by assumption every element of $G$ is of finite order,$E$ is also closed under inverses and is consequently a subgroup of $G$.

($\rightarrow$) Suppose there exists an $a\in G$ such that $ord(a)=\infty$ and consider the set $E=\left\{ a^{n} | n\in \mathbb {N}\right\} \subseteq \mathcal P(G)$, clearly $EE=E$ and therefore $\sG=\left\{ E \right\}$ is a power group of $G$. Notice that $E$ is the identity element of $\sG$ and is not a group, thus $\sG$ is not a subquotient of $G$.  
\end{proof}

\section {What lies ahead}

Central to the understanding of power groups is the analysis of their identity elements.  While in the case of subquotients, the identity element must be a subgroup, in general all one needs is a subset $E \subset G$ such that $E^2=E$.  Assume, for example that one has such a subset $E$ of a group $G$ and say that $E$, in addition, satisfies a {\it normalcy} property that for all $a \in G$, $aE=Ea$. Examples of appropriate choices of $E$ include, for the additive group of integers $G= \mathbb Z$, the set $E$ of non-negative integers and, for the additive group of rational numbers $G= \mathbb Q$, the set $E= \mathbb Q^+$ of positive rationals.  

Following the steps of the usual proof to show that, for a normal subgroup $N$ of $G$, $\frac {G}{N}$ is a group, one obtains the same result here with weaker hypotheses.  In other words, it is straightforward to show that the group in the following definition is indeed a group:

\begin{define} For any group $G$, given $E \subset G$ such that $E^2=E$ and a subgroup $H < G$; assume, further, that for every $a\in H$, $aH=Ha$, then
the family $\{ aE|a \in H \}$  is a power group which will be called the group of cosets induced by $E$ and $H$ and will be denoted $\frac {H}{E}$.  It satisfies that, for any $a,b \in H$, $(aE)(bE) = (ab)E$.  We will refer to any such power group as a {\it group of cosets} of the group $G$. Notice that the definition does not require that $E$ be a subset of $H$.
\end{define}

\begin{rmk}\label{cosetsaresubquotients}  Every group of cosets is isomorphic to a subquotient.
\end{rmk}
\begin {proof}
Define a map $\varphi: H \rightarrow \frac {H}{E}$ via $\varphi(a)=aE$ and verify it is a group epimorphism. Notice that subquotients are always groups of cosets but, in general, groups of cosets need not even be partitions.  

\end {proof}

An interesting feature of this family of power groups  is that not only is the identity element not necessarily a subgroup but actually it 
does not even need contain the identity element of the group (as seen in the example $E= \mathbb Q^+$ mentioned above.)
Motivated by the realization that the traditional construction yields more power groups, it is natural to consider a modification of the original question from this paper and ask for which groups it is the case that all power groups are groups of cosets.  In fact, in light of Remark \ref{cosetsaresubquotients}, another natural question is to characterize those groups $G$ all power groups of $G$ are isomorphic to subquotients of $G$.  As we finish this project, we do not yet know the answers to either one of these questions; however, we can show that $\mathbb Z$ is an example of a group whose only power groups are its groups of cosets and that $\mathbb Q$ is an example of the opposite situation; in fact $\mathbb Q$ is a group having power groups that are not isomorphic to subquotients.  

\begin{thm}    Every power group of $\mathbb Z$ is a group of cosets.
\end{thm}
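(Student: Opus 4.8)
The plan is to study the identity element $E$ of an arbitrary power group $\sG$ of $\mathbb{Z}$, exploiting that $E+E=E$ (the operation written additively), and to split into cases according to the shape of $E$. The preliminary input is a coarse classification of nonempty subsets $E\subseteq\mathbb{Z}$ with $E+E=E$. First I would show $0\in E$: if $E$ were contained in $\mathbb{Z}_{>0}$ then its least element, being a sum of two elements of $E$ (since $E\subseteq E+E$), would be at least twice itself, which is absurd, and likewise if $E\subseteq\mathbb{Z}_{<0}$; otherwise $E$ contains a positive element $g$ and a negative element $f$, and summing $-f$ copies of $g$ with $g$ copies of $f$ exhibits $0$ as an element of $E$. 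Two further consequences I would draw by the same elementary combination trick are: (i) if $E$ is bounded below then $\min E=0$ (dually $\max E=0$ if $E$ is bounded above), since $2\min E\in E$ forces $\min E\ge 0$ while $0\in E$ forces $\min E\le 0$; and (ii) if $E$ is bounded neither below nor above, then for every positive $e\in E$ a suitable nonnegative combination of $e$ with a negative element of $E$ equals $-e$, and symmetrically for negative $e$, so $E=-E$; being also closed under addition, $E$ is then a subgroup of $\mathbb{Z}$.

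This leaves a clean dichotomy: either $E$ is a subgroup of $\mathbb{Z}$, or $E$ is bounded below, or $E$ is bounded above. If $E$ is a subgroup of $\mathbb{Z}$, then, its being the identity of $\sG$, Theorem~1 applies and shows $\sG$ is a subquotient of $\mathbb{Z}$; recalling that every subquotient is a group of cosets, we are done in this case. The case in which $E$ is bounded above reduces to the bounded-below case by applying the automorphism $x\mapsto -x$ of $\mathbb{Z}$: it carries $\sG$ to an isomorphic power group with identity $-E$, and carries a group of cosets back to a group of cosets (using that every subgroup of $\mathbb{Z}$ equals its own negative). So it remains to treat the case $E$ bounded below, where $\min E=0$.

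Fix $A\in\sG$ with inverse $A^{-1}$, so $A+A^{-1}=E$. Since $E$ is bounded below, so are $A$ and $A^{-1}$ (translate $A$ by a fixed element of $A^{-1}$ into $E$). Put $a_0=\min A$ and $b_0=\min A^{-1}$; then $a_0+b_0=\min(A+A^{-1})=\min E=0$, so $b_0=-a_0$. On one hand $a_0+E\subseteq A+E=A$ because $a_0\in A$ and $E$ is the identity of $\sG$; on the other hand $A+b_0\subseteq A+A^{-1}=E$ gives $A\subseteq E-b_0=a_0+E$. Hence $A=a_0+E$: every member of $\sG$ is the translate of $E$ by its own least element. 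Now let $H=\{\min A\mid A\in\sG\}$. Since $\min(a+E)=a$ for every $a\in\mathbb{Z}$, the map $a\mapsto a+E$ is injective and identifies $\sG$ with $\{a+E\mid a\in H\}$; transporting the group structure of $\sG$ across this bijection — and reading off that $(a+E)+(b+E)=(a+b)+E$ has least element $a+b$, and that $(a+E)+(c+E)=E$ forces $a+c=0$ — shows $H$ is a subgroup of $\mathbb{Z}$. As $\mathbb{Z}$ is abelian, the normalcy hypothesis in the definition of a group of cosets is automatic, so $\sG=\frac{H}{E}$ is a group of cosets, completing the proof.

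The step I expect to be the crux is the structural dichotomy for sets with $E+E=E$ — in particular, recognizing that the non-subgroup possibilities (such as $\mathbb{Z}_{\ge 0}$, $2\mathbb{Z}_{\ge 0}$, or $\{0,2,3,4,\dots\}$) are always one-sided and hence amenable to the minimum trick; once that is in hand, propagating boundedness from $E$ to all of $\sG$ and extracting the subgroup $H$ are routine. A minor point worth checking is the degenerate subcase where $E$ is finite, which forces $E=\{0\}$ and makes every element of $\sG$ a singleton — but this is already subsumed by the bounded-below case.
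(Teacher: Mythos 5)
Your proof is correct and follows essentially the same route as the paper's: classify the idempotent identity $E$ (subgroup of $\mathbb{Z}$ versus a one-sided set containing $0$), reduce to the bounded-below case, show each $A\in\sG$ equals $\min A + E$, and take $H$ to be the set of minima so that $\sG=\frac{H}{E}$. You supply details the paper dismisses as ``easily'' obtained (e.g.\ that $H$ is a subgroup and that the unbounded case forces $E$ to be a subgroup), but the skeleton is identical.
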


\begin{proof}
One must determine first that the only possible non-empty choices for an identity element $E$ of a powergroup $\mathcal G$ of $\mathbb Z$. One easily gets that, outside subgroups of $\mathbb Z$, $E$ must be a subset of $\mathbb Z$ which contains $0$ and is such that either all its other elements are positive or all of them are negative.  Focus, without losing generality, on the case when the elements of $E$ are non-negative; the other case is handled similarly.  One can easily show that every element $A \mathcal G$ has a smallest element, say $a \in A$ and $A=aE$. Incidentally, $a$ may also be recognized by the property that it is the one element of $A$ such that $-a \in -A$.  Denote $H$ the set of smallest elements in elements of $\mathcal G$; it follows that $\mathcal G = \frac {H}{E}$.

\end{proof}
\begin{thm}
    The additive group of rational numbers $\mathbb Q$ is an example of a group having a power group which is not a group of cosets.
\end{thm}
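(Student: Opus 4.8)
The strategy is to exhibit an explicit power group of $\mathbb{Q}$ and then show it cannot be a group of cosets. A natural candidate builds on the observation already made in the excerpt: $E = \mathbb{Q}^+$ satisfies $E^2 = E$ (under addition $E + E = E$) and is "normal" since $\mathbb{Q}$ is abelian, so $\{q + \mathbb{Q}^+ : q \in \mathbb{Q}\}$ is a group of cosets — but that is not what we want. Instead I want a power group whose identity element $E$ is *not* of the form needed for a group of cosets, or more precisely one that fails to be a partition in a way that no relabeling can fix. The key realization is that for a group of cosets $\frac{H}{E}$ one has the epimorphism $\varphi: H \to \frac{H}{E}$, so a group of cosets is always a quotient of a subgroup of $\mathbb{Q}$; since every subgroup of $\mathbb{Q}$ is abelian and locally cyclic, every group of cosets of $\mathbb{Q}$ is abelian. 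So it would suffice — if we only wanted "not a group of cosets" in the crudest sense — to... no: every power group of $\mathbb{Q}$ is a quotient-like object and $\mathbb{Q}$ is abelian, so that route fails. The real distinction must be structural: a group of cosets of $\mathbb{Q}$, being $\varphi(H)$ for $H \le \mathbb{Q}$, is a homomorphic image of a subgroup of $\mathbb{Q}$, hence is *divisible-adjacent* in a controlled way and in particular has cardinality at most that of $\mathbb{Q}$ and is abelian. That still does not separate anything, so the obstruction must be finer: I should find a power group $\mathcal{G}$ of $\mathbb{Q}$ such that for the union $H = \bigcup_{A \in \mathcal{G}} A$ and identity $E$, there is genuinely no subgroup $H' \le \mathbb{Q}$ and no $E'$ with $\mathcal{G} = \{a + E' : a \in H'\}$.

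First I would pin down what the identity element $E$ of an arbitrary power group of $\mathbb{Q}$ can look like: $E + E = E$ forces $E$ to be closed under addition, i.e. a sub-semigroup of $(\mathbb{Q}, +)$, and cancellation in $\mathcal{G}$ plus the argument from Theorem 1's proof shows $0 \notin E$ is possible exactly when $E$ contains no element of finite order issues — but $\mathbb{Q}$ is torsion-free, so if $0 \in E$ then $E$ closed under inverses would make it a subgroup; the interesting case is $0 \notin E$, e.g. $E = \mathbb{Q}^+$ or $E = \{q \in \mathbb{Q} : q \ge \sqrt{2}\} \cap \mathbb{Q}$-type sets, or a sub-semigroup like $E = \{q \in \mathbb{Q}^+ : q \ge 1\} \cup \{0\}$... actually I want $0 \notin E$. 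The cleanest move: take $E$ to be a sub-semigroup of $\mathbb{Q}^+$ that is *not* of the form $(r, \infty) \cap \mathbb{Q}$ or $[r,\infty)\cap \mathbb{Q}$ — for instance something engineered so that the cosets $q + E$ overlap in an incompatible pattern — then build $\mathcal{G}$ from translates of $E$ and show closure. Then I would show: if $\mathcal{G}$ were a group of cosets $\frac{H'}{E'}$, the map $\varphi$ would give $|\mathcal{G}| = [H' : \ker\varphi]$ with each element a coset of the subgroup-like $E'$, forcing the members of $\mathcal{G}$ to all be translates of one fixed set $E'$ with $E' + E' = E'$; comparing $E'$ against the actual members of $\mathcal{G}$ (which are translates of $E$) and using that two translates $q + E$ and $q' + E$ of a sub-semigroup of $\mathbb{Q}^+$ are equal iff $q = q'$, I would derive that $E$ itself must equal a translate of $E'$, hence $E' = E - c$ for some $c$, and then push this to a contradiction with the specific structure of $E$ — e.g. $E - c$ will not be closed under addition, or the index set $H'$ will fail to be a group.

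The main obstacle, I expect, is the last step: actually producing a sub-semigroup $E$ of $\mathbb{Q}^+$ and a family $\mathcal{G}$ of its translates that (i) genuinely forms a group under subset addition and (ii) provably cannot be realized as a group of cosets under *any* alternative choice of identity and index set. Showing (i) requires verifying associativity (automatic), closure ($(q+E)+(q'+E) = (q+q') + (E+E) = (q+q')+E$, so we need the index set closed under addition and negation), an identity (need some translate $q_0 + E$ with $(q_0+E)+(q+E) = q+E$ for all members, i.e. $q_0 + E + E = E$-shifted, forcing $q_0 = 0$ so $E \in \mathcal{G}$ and $0$ in the index set), and inverses (for each $q$ in the index set, $-q$ in it and $(q+E)+(-q+E) = E$ — automatic once the index set is a subgroup). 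So $\mathcal{G}$ will just be $\{q + E : q \in K\}$ for a subgroup $K \le \mathbb{Q}$, and it IS trivially a group of cosets with $H' = K$, $E' = E$ — unless the index set is *not* a subgroup, which is where the real content lies. Thus the crux is to find $\mathcal{G}$ whose index set is forced to be a non-subgroup (a proper sub-semigroup or worse), analogous to how Theorem 3's proof found that for $\mathbb{Z}$ the index set is always recoverable as a subgroup. I would look for $E$ such that distinct translates can coincide — this happens precisely when $E$ is invariant under some nonzero translation, i.e. $E + t = E$ for some $t \ne 0$; then $\{q + E : q \in S\}$ can be a group indexed by a quotient $S/\langle t\rangle$-like semigroup even when $S$ is not a group, breaking the coset structure. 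Concretely I anticipate $E = \{q \in \mathbb{Q} : q > 0\} = \mathbb{Q}^+$ does satisfy $\mathbb{Q}^+ + t = \mathbb{Q}^+$ only for $t=0$, so that fails; instead something like $E = \{\, m + n\sqrt{2} > 0 : m,n \in \mathbb{Z}, \text{with } n \ge 0 \text{ appropriately}\}$ viewed inside $\mathbb{Q}$ — no, must stay in $\mathbb{Q}$. The honest candidate is $E$ a sub-semigroup of $\mathbb{Q}^+$ together with a translation-periodicity built from the divisibility of $\mathbb{Q}$: e.g. using that $\mathbb{Q}/\mathbb{Z}$ is torsion, take $E$ = preimage in $\mathbb{Q}^+ \cup\{$suitable points$\}$ of a subset of $\mathbb{Q}/\mathbb{Z}$. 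Identifying and verifying the right such $E$ is the paragraph where the genuine work lives, and I would present that construction explicitly, verify $\mathcal{G}$ is a power group, and then verify non-coset-ness by the translate-equality analysis above.
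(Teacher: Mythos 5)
There is a genuine gap here: your proposal is a plan whose central mechanism does not work, and along the way you explicitly discard the argument that actually proves the theorem. You write that the cardinality bound (``a group of cosets of $\mathbb Q$ \dots has cardinality at most that of $\mathbb Q$'') ``still does not separate anything,'' apparently assuming that any power group of $\mathbb Q$ is itself countable. That assumption is false, and it is exactly the point of the paper's proof: take $\mathcal G = \{ (r,\infty)\cap \mathbb Q \mid r \in \mathbb R\}$. One checks that $\bigl((r,\infty)\cap\mathbb Q\bigr) + \bigl((s,\infty)\cap\mathbb Q\bigr) = (r+s,\infty)\cap\mathbb Q$ (density of $\mathbb Q$ gives the nontrivial inclusion), so $\mathcal G$ is a power group of $\mathbb Q$ with identity $\mathbb Q^+$, and $r \mapsto (r,\infty)\cap\mathbb Q$ is an isomorphism $(\mathbb R,+)\cong\mathcal G$ because distinct reals cut $\mathbb Q$ differently. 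Since every group of cosets $\frac{H}{E}$ is the image of the epimorphism $\varphi: H \to \frac{H}{E}$ with $H \le \mathbb Q$, it has at most $|\mathbb Q| = \aleph_0$ elements, while $|\mathcal G| = 2^{\aleph_0}$. The invariant you dismissed is the entire proof; the subtlety is only that the translates may be indexed by irrational parameters even though the sets themselves live inside $\mathbb Q$.

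Independently of that, the alternative route you sketch cannot be completed as stated. Your crux is to find a sub-semigroup $E$ of $\mathbb Q^+$ with $E + t = E$ for some $t \ne 0$, so that the index set of the translates fails to be a subgroup. But no nonempty subset of $\mathbb Q$ that is bounded below (in particular no subset of $\mathbb Q^+$) can satisfy $E + t = E$ with $t \ne 0$: that identity gives $E = E - nt$ for all $n \ge 0$ (replacing $t$ by $-t$ if necessary), forcing $E$ to contain elements below any bound. So the ``translation-periodic'' examples you hope to engineer do not exist in the regime you are working in, and by your own analysis the remaining candidates $\{q+E : q \in K\}$ with $K$ a subgroup \emph{are} groups of cosets. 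You correctly identify where ``the genuine work lives'' but do not supply it, and the direction in which you propose to look is closed off. The fix is to return to the cardinality argument with the uncountable family above.
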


\begin{proof}
Let $\mathcal G = \{ (r, \infty) \cap \mathbb Q| r \in \mathbb R \}$.  Then $\mathcal G \cong \mathbb R$, in particular $|\mathcal G| > |\mathbb Q|$ hence $\mathcal G$ is not isomorphic to a subquotient.
\end{proof}

In general, it seems interesting to be able to determine, given two groups $G_1$ and $G_2$, when $G_1$ has a power group isomorphic to $G_2$.  
 For convenience, let us use the expression {\it $G_2$ underlies $G_1$} when $G_1$ has a power group isomorphic to $G_2$. The last example shows an instance of two non-isomorphic groups that underlie one another. What properties are shared by groups that underlie one another? Another interesting question that we suggest here is whether {\it underlying} is a transitive relation, namely whether, for three groups $G_i$  $(i = 1 , 2, 3)$, if $G_3$ underlies $G_2$ and $G_2$ underlies $G_1$, can one conclude that $G_3$ underlies $G_1$?  Notice that if all elements of $G_1$ have finite order then the same would be true of $G_2$ and $G_3$ and (since  a subquotient of a subquotient is a subquotient) concluding that the result holds would be a straightforward application of Theorem 1; settling the general case seems to be more difficult.

\end{document}